\numberwithin{equation}{subsection}
\newtheorem{theorem}{Theorem}[section]
\newtheorem*{theorem*}{Theorem}
\newtheorem{proposition}[theorem]{Proposition}
\newtheorem{lemma}[theorem]{Lemma}
\theoremstyle{definition}
\newtheorem{remark}[theorem]{Remark}
\def\soc{{\rm soc}}
\def\Aut{{\rm Aut}}
\def\Sym{{\rm Sym}}
\def\GL{{\rm GL}}
\def\AGL{{\rm AGL}}
\def\AGAL{{\rm A\Gamma L}}
\def\GAL{{\rm \Gamma L}}
\newcommand{\C}{\mathrm{C}}
\newcommand{\Cent}{\mathbf{C}}
\newcommand{\E}{\mathrm{E}}
\newcommand{\V}{\mathrm{V}}
\newcommand{\Z}{\mathrm{Z}}
\newcommand{\FF}{\mathbb{F}}
\renewcommand{\leq}{\leqslant}
\renewcommand{\geq}{\geqslant}
\begin{document}
\title[$2$-arc-transitive graphs]{On $2$-arc-transitive graphs of order $kp^n$}

\author[L. Morgan, E. Swartz, G. Verret]{Luke Morgan, Eric Swartz, Gabriel Verret}

\address{Luke Morgan, Eric Swartz and Gabriel Verret, School of Mathematics and Statistics, 
\newline\indent University of Western Australia, 35 Stirling Highway, Crawley, WA 6009, Australia.} 
\email{luke.morgan@uwa.edu.au,eric.swartz@uwa.edu.au,gabriel.verret@uwa.edu.au}

\address{Gabriel Verret, FAMNIT, University of Primorska, Glagolja\v{s}ka 8, SI-6000 Koper, Slovenia.}

\thanks{The research of the first author is supported by the Australian Research Council grant
DP120100446. The research of the second author is supported by the ARC grant
DP120101336. The last author is supported by UWA as part of the ARC grant DE130101001.}

\subjclass[2010]{Primary 20B25; Secondary 05E18}
\keywords{2-arc-transitive  graphs; graph-restrictive; }

\begin{abstract}
We show that there exist functions $c$ and $g$ such that, if $k$, $n$ and $d$ are positive integers with $d> g(n)$ and $\Gamma$ is a $d$-valent $2$-arc-transitive graph of order $kp^n$  with $p$ a prime, then $p\leqslant kc(d)$. In other words, there are only finitely many $d$-valent 2-arc-transitive graphs of order $kp^n$ with $d>g(n)$  and $p$ prime. This generalises a recent result of Conder, Li and Poto\v{c}nik.
\end{abstract}

\maketitle

\section{Introduction}

All graphs considered in this paper are finite, connected and simple (they are  undirected and do not have loops or multiple edges). 
A \emph{$2$-arc} of a graph is a triple $(u,v,w)$ of  pairwise distinct vertices such that $v$ is adjacent to both $u$ and $w$. We say that a graph is \emph{$2$-arc-transitive} if its automorphism group acts transitively on its $2$-arcs.

The class of  2-arc-transitive graphs has attracted a lot of interest. Although many  partial classification results have been obtained,  a full classification might be out of reach. A nice survey of some of the main results in this area can be found in~\cite{seress}. Recently, Conder, Li and Poto\v{c}nik have proved the following.

\begin{theorem*}[{\cite[Theorem 1]{ConderLiPot}}]\label{ConderLiPotTheo}
Let $k$ be a positive integer.
\begin{enumerate}
\item If $d\geqslant 3$, then there exist only finitely many $d$-valent $2$-arc-transitive graphs of order $kp$  with $p$ a prime.
\item If $d\geqslant 4$, then there exist only finitely many $d$-valent $2$-arc-transitive graphs of order $kp^2$  with $p$ a prime.
\end{enumerate}
\end{theorem*}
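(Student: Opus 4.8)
The plan is to fix a $2$-arc-transitive subgroup $G \le \Aut(\Gamma)$ and to observe that, since there are only finitely many graphs of any bounded order, the statement is equivalent to bounding $p$ by a function of $k$ and $d$. Writing $|G| = |V\Gamma|\,|G_v| = kp^n|G_v|$ for a vertex-stabiliser $G_v$, the first step is to bound $|G_v|$ by a function of $d$ alone. The local action $G_v^{\Gamma(v)}$ is $2$-transitive of degree $d$, so I would invoke the graph-restrictiveness of the relevant $2$-transitive groups to obtain $|G_v| \le c(d)$. Consequently, for every prime $p > \max\{k, c(d)\}$ we have $p \nmid k|G_v|$, so a Sylow $p$-subgroup $P$ of $G$ has order exactly $p^n$; in particular, for $n = 1$ it is cyclic of order $p$, and for $n = 2$ it has order $p^2$.

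The second step is a normal-quotient reduction in the sense of Praeger. If some nontrivial normal subgroup $N$ of $G$ has at least three orbits on vertices, then $N$ is semiregular, the quotient $\Gamma_N$ is again a $d$-valent $2$-arc-transitive graph, $\Gamma$ is a cover of $\Gamma_N$, and $|V\Gamma_N| = |V\Gamma|/|N|$ is strictly smaller; iterating, I would reduce to the situation where $G$ acts quasiprimitively or bi-quasiprimitively on the vertices of the quotient. Two outcomes can occur: either the prime $p$ is absorbed into the kernels of the reduction, so that the final quotient has order coprime to $p$ and hence bounded, or $p$ still divides the order of a basic quotient on which $G$ acts quasiprimitively, and the base instances are then graphs of order $p$ or $p^2$. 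For prime order the argument is self-contained: a transitive group of prime degree is either $2$-transitive, forcing $\Gamma = K_p$ and $p = d+1$, or is contained in $\AGL(1,p)$, whose cyclic point-stabiliser cannot act $2$-transitively of degree $d \ge 3$; so valency at least $3$ leaves only $K_{d+1}$.

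The crux, and the step I expect to be the main obstacle, is the quasiprimitive case, where I would appeal to Praeger's classification of quasiprimitive $2$-arc-transitive groups and treat each O'Nan--Scott type in turn. In the affine type one has $|V\Gamma| = p^n$ (forcing $k = 1$ for large $p$) and $G \le \AGL(n,p)$ with $G_v \le \GL(n,p)$ acting $2$-transitively of degree $d$ on a neighbourhood inside $\FF_p^n \setminus \{0\}$; here the valency threshold is both visible and necessary, since for $n = 1$ no cyclic subgroup of $\GL(1,p)$ is $2$-transitive of degree $d \ge 3$, while for $n = 2$ a triangle-stabiliser in $\GL(2,p)$ acting as $S_3$ yields a cubic example for every $p$, so that only the hypothesis $d \ge 4$ bounds $p$. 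The genuinely difficult type is the almost simple one: here I would use the Classification of Finite Simple Groups together with the classification of $2$-transitive groups realising the local action, and reconcile the large prime power $p^n$ dividing the index $|G:G_v|$ with the bounded valency $d$ and bounded stabiliser, to show that such configurations exist for only finitely many $p$. Carrying out this last reconciliation — pinning down which simple socles admit a subgroup of the required index supporting a $d$-valent $2$-arc-transitive local action — is where the real work lies.
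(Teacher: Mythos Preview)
This theorem is quoted from \cite{ConderLiPot} and is not proved in the present paper; it serves purely as motivation for Theorem~\ref{main}, which generalises it but with far weaker thresholds on $d$. Your sketch is, in outline, the strategy of the cited paper itself: bound $|G_v|$ via graph-restrictiveness, apply Praeger's normal-quotient reduction to reach a (bi-)quasiprimitive basic pair, and then run through the O'Nan--Scott types of the global vertex action, with the almost simple type requiring a CFSG-level case analysis. You correctly identify that last step as the crux, and as written your proposal is a plan rather than a proof: the reconciliation of a large prime-power index with a bounded local action in the almost simple case is exactly the substantial work of \cite{ConderLiPot}.

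It is worth contrasting this with the route the present paper takes for its generalisation, since the two are genuinely different. Instead of reducing the \emph{global} action via normal quotients and classifying quasiprimitive types on vertices, the paper works entirely through the \emph{local} group $L=G_v^{\Gamma(v)}$. The key step (Lemma~\ref{mainlemma}) is that once $p>kc(L)$ the Sylow $p$-subgroup $P\trianglelefteq G$ has order exactly $p^n$, and faithfulness of $G_v$ on the Frattini quotient $P/\Phi(P)$ yields an embedding $G_v\hookrightarrow\GL(n,p)$ with $p\nmid|G_v|$. The problem is thereby converted into a purely representation-theoretic one: which $2$-transitive groups of degree $d$ can be involved in a coprime subgroup of $\GL(n,\FF)$? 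Jordan's theorem disposes of the non-affine local types, and tailored arguments handle the affine ones. This sidesteps any O'Nan--Scott analysis of the vertex action and any enumeration of simple socles, at the price of producing a threshold $g(n)$ vastly larger than the sharp values $3$ and $4$ that the method of \cite{ConderLiPot} achieves for $n=1,2$.
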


Inspired by this result, we are naturally led to ask whether an analogous statement holds for graphs of order $kp^3$,  $kp^4$, etc. This is exactly the content of our main theorem:

\begin{theorem}\label{main}
There exist functions $c : \mathbb N \rightarrow \mathbb N$ and $g: \mathbb N \rightarrow \mathbb N$ such that, if $k$, $n$ and $d$ are positive integers with $d> g(n)$ and there exists a $d$-valent $2$-arc-transitive graph of order $kp^n$  with $p$ a prime, then $p\leqslant kc(d)$.
\end{theorem}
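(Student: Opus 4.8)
The plan is to induct, after first disposing of a trivial regime. If $p\leq k$ or $p\leq d$ then $p\leq kd\leq kc(d)$ as soon as we insist $c(d)\geq d$, so throughout I assume $p>\max\{k,d\}$; in particular $p\nmid k$, and since the local action $L=G_v^{\Gamma(v)}$ is $2$-transitive of degree $d$ and hence embeds in $\Sym(\Gamma(v))$ with $|L|$ dividing $d!$, we also have $p\nmid|L|$. Writing $G=\Aut(\Gamma)$, I would then reduce to the quasiprimitive case: if $G$ has a nontrivial normal subgroup $N$ with at least three orbits, Praeger's theory of normal quotients of $2$-arc-transitive graphs says that $N$ is semiregular, that $\Gamma$ is a cover of $\Gamma_N$ (so the valency $d$ is preserved), and that $G/N$ acts $2$-arc-transitively on $\Gamma_N$. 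Since $|N|$ divides $kp^n$, the order of $\Gamma_N$ is again of the form $k'p^{n'}$ with $k'\leq k$ and $n'\leq n$, and $d>g(n)\geq g(n')$ whenever $g$ is nondecreasing; as $|G/N|<|G|$, induction yields $p\leq k'c(d)\leq kc(d)$. The case $n=1$ may be taken as the base, being part~(1) of the theorem of Conder, Li and Poto\v{c}nik.

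With $G$ quasiprimitive, I would invoke Praeger's determination of the admissible quasiprimitive types for a $2$-arc-transitive group: HA, AS, TW or PA. The affine (HA) case is where the role of $g$ is cleanest. Here the socle is a regular elementary abelian group, so $|V|=r^m$ for a prime $r$; as $p>k$ divides $r^m$ we get $r=p$, $k=1$ and $m=n$, whence $V=\FF_p^n$ and $G_v\leq\GL_n(p)$. The $d$ neighbours of the zero vector form a spanning set $S$ of $V$ that $G_v$ permutes, and since a linear map fixing $S$ pointwise fixes all of $\langle S\rangle=V$, the group $G_v$ embeds faithfully into the symmetric group of degree $d$. Hence $p>d$ forces $p\nmid|G_v|$, so by Maschke the permutation module $\FF_p[S]$ is semisimple; as $L$ is $2$-transitive and $p\nmid d$, its deleted permutation submodule is irreducible of dimension $d-1$, and the only quotient modules of $\FF_p[S]$ have dimension $0,1,d-1$ or $d$. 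Since $V$ is such a quotient of dimension $n$ and $G_v$ acts faithfully and nonabelianly, $n\in\{d-1,d\}$, that is $d\leq n+1$; choosing $g(n)\geq n+1$ makes this impossible, so the affine case cannot arise.

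The same linear mechanism should govern the degenerate situation left over from the reduction, in which a normal subgroup accounts for the entire $p$-part and $\Gamma$ is a cover of a graph $\Gamma^{*}$ of order at most $k$. There the covering group is an $\FF_p$-module for a group of bounded order, and because a cover preserves valency the local $2$-transitive action again constrains the module to a deleted-permutation-type structure whose dimension is comparable to $d$; connectivity forces the dimension up while the module bound forces it down, once more yielding an inequality of the shape $d\leq n+1$ that the hypothesis $d>g(n)$ defeats. For each pair $(k,d)$ this leaves only finitely many configurations, so $p$ is bounded.

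The genuinely hard part will be the almost simple case (and the product-action and twisted-wreath cases, which I expect to reduce to it). Here the socle is $T$ or $T^{\ell}$ with $T$ nonabelian simple, and since $p>|\mathrm{Out}(T)|$ the large prime power $p^n$ must divide $|T|$. One would then run through the finite simple groups, for each admissible $T$ analysing the transitive actions of valency $d$ and bounding the largest prime $p$ with $p^n\mid|T|$ in terms of $d$; controlling $G_v^{[1]}$ via the Thompson--Wielandt theorem (so that for $p>d$ the $p$-part of $|G_v|$ is confined to the $p$-group $G_{uv}^{[1]}$) is what makes the local contribution tractable. The delicate point is that there are genuine infinite families of small-valency $2$-arc-transitive graphs with almost simple group, such as those arising from $\PSL_2(p)$, whose valency stays bounded while $n$ grows; the function $g$ must be calibrated so that $d>g(n)$ excludes precisely these families, and the crux of the whole argument is to verify that a single nondecreasing $g$ does so uniformly across all simple socles and all of the product-action reductions.
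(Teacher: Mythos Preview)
Your affine (HA) case is essentially correct and is close in spirit to what the paper does, but the remainder of the proposal has a genuine gap: the almost simple, twisted-wreath and product-action cases are not proved, only described as ``run through the finite simple groups''. You yourself identify this as ``the crux of the whole argument'' and leave it open. A case analysis over all simple socles controlling $2$-arc-transitive actions would be formidable, and it is not the route the paper takes.

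The key idea you are missing is that the embedding $G_v\hookrightarrow\GL(n,p)$, which you obtain only when $G$ is quasiprimitive of affine type, in fact holds in \emph{every} case once $p>k\,c(L)$. The reason is elementary: by the Trofimov--Weiss theorem every $2$-transitive local group $L$ is graph-restrictive, so $|G_v|\leq c(L)$ and hence $|G:P|=k|G_v|<p$ for a Sylow $p$-subgroup $P$ of $G$. Sylow's theorem then forces $P\trianglelefteq G$, and a short argument with the centraliser of $P$ shows $G_v$ acts faithfully on $P/\Phi(P)$, giving $G_v\lesssim\GL(n,p)$ with $p\nmid|G_v|$. This is the paper's Lemma~\ref{mainlemma}, and it makes the global quasiprimitive type of $G$ irrelevant.

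With $G_v\lesssim\GL(n,p)$ in hand, the paper analyses the \emph{local} group $L=G_v^{\Gamma(v)}$ rather than $G$. It splits according to whether $L$ (not $G$) is affine or non-affine as a $2$-transitive group. When $L$ is non-affine, its socle $S$ has trivial soluble radical and is involved in $\GL(n,p)$ in coprime characteristic; Jordan's theorem on finite complex linear groups then gives $d\leq|S|\leq J(n)$, so $d>g(n)\geq J(n)$ is impossible. When $L$ is affine, further representation-theoretic work (including your deleted-permutation-module idea in a more refined form) gives $d\leq h(n)$. No normal-quotient reduction, no Praeger types, and no enumeration of simple socles is needed.
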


In other words, if $k$, $n$ and $d$ are fixed with $d$ large enough, then there are only finitely many $d$-valent $2$-arc-transitive graphs of order $kp^n$ with $p$ a prime. In some sense, this shows that, for classifying  $2$-arc-transitive graphs along these lines, the most interesting case is when the valency is small. Indeed, there has been much activity in classifying such graphs, especially with $n$ and $k$ small. (For example, an overview of the case $(n,d)=(1,3)$  can be found in~\cite[Section 6]{ConderLiPot}.)

The proof of Theorem~\ref{main}, which can be found in Section~\ref{sec:proof}, divides naturally into the affine and non-affine cases. Preparatory work for these  cases is done in Sections~\ref{sec:nonaffine} and \ref{sec:affine}, culminating in Theorems~\ref{theo:nonaffine} and \ref{theo:affine}. (In fact, Theorem~\ref{theo:nonaffine}   is stronger than required.)  To complete the proof of Theorem~\ref{main}, we also require a result of Trofimov and Weiss that depends upon the Classification of the Finite Simple Groups (CFSG)~\cite{GLS}. On the other hand, all of the results in Section~\ref{sec:pre} are CFSG-free.

\section{Preliminaries}
\label{sec:pre}

We begin with some preliminaries 
that set the stage for the proof of Theorem~\ref{main}. We denote the cyclic group of order $n$ by $\C_n$ and, for a prime power $d$, the elementary abelian group of order $d$ by $\E_d$. The \emph{soluble radical} of a group is its largest  normal soluble subgroup. 

We write $H \lesssim G$ if $H$ is isomorphic to a subgroup of $G$. We will say that a group $H$ is  \emph{involved} in a group $G$  if there are subgroups $K$ and $N$ of $G$ such that  $N$ is a normal subgroup of $K$ and $K/N \cong H$. 

A permutation group is called \emph{quasiprimitive} if each of its non-trivial normal subgroups is transitive. A transitive permutation group is called \emph{$2$-transitive} if a point-stabiliser is transitive on the remaining points. It is easy to see that a $2$-transitive group is quasiprimitive.

A graph is $G$-vertex-transitive if $G$ is a group of automorphisms of the graph acting transitively on its vertices. Let $\Gamma$ be a $G$-vertex-transitive graph and let $v$ be a vertex of $\Gamma$. We denote the set of neighbours of $v$  in $\Gamma$ by $\Gamma(v)$. We write $G_v^{\Gamma(v)}$ for the permutation group induced by the action of $G_v$ on $\Gamma(v)$ and  $G_v^{[1]}$ for the kernel of this action. Given a permutation group $L$, the pair $(\Gamma,G)$ is said to be \emph{locally-$L$}  if  $G_v^{\Gamma(v)}$ is permutation isomorphic to $L$. Note that a graph $\Gamma$ is 2-arc-transitive if and only if the pair $(\Gamma,\mathrm{Aut}(\Gamma))$ is locally-$L$ with $L$ a $2$-transitive group.

\subsection{Graph-restrictive groups and a key lemma}

Following \cite{junior}, we say that a transitive group $L$ is \emph{graph-restrictive} if there exists a constant $c(L)$ such that, for every locally-$L$ pair $(\Gamma,G)$ and $v$ a vertex of $\Gamma$, the inequality $|G_v|\leqslant c(L)$ holds. 

The following lemma, which is inspired by~\cite[Theorem~2]{ConderLiPot}, is the crucial first step in our proof of Theorem~\ref{main}.

\begin{lemma}   \label{mainlemma}
Let $L$ be a quasiprimitive graph-restrictive permutation group with corresponding constant $c(L)$, let $k$ and $n$ be positive integers and let $p$ be a prime with $p> kc(L)$.  If $(\Gamma,G)$ is a locally-$L$ pair such that $\Gamma$ has order $kp^n$  and $v$ is a vertex of $\Gamma$, then the following hold:
\begin{enumerate} 
\item   $|G_v|$ is coprime to $p$; 
\item $G_v$ is isomorphic to a subgroup of $\GL(n,p)$. 
\end{enumerate} 
\end{lemma}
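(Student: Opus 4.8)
The plan is to treat the two parts quite differently: part (1) is a one-line consequence of graph-restrictiveness, while part (2) requires a structural analysis of $G$.

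For part (1), since $(\Gamma,G)$ is locally-$L$ and $L$ is graph-restrictive with constant $c(L)$, the definition gives $|G_v|\leq c(L)$. If $p$ divided $|G_v|$, then $p\leq|G_v|\leq c(L)\leq kc(L)$ (using $k\geq 1$), contradicting $p>kc(L)$. Hence $\gcd(|G_v|,p)=1$, and moreover $|G_v|\leq c(L)<p$ and $p\nmid k$ (as $p>k$). Since $L$ is transitive, $G$ is vertex-transitive, so by orbit--stabiliser $|G|=kp^n|G_v|$ with $|G|_p=p^n$; thus a Sylow $p$-subgroup $P$ has order $p^n$ and is semiregular with $k$ orbits. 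The inequality $|G|_{p'}=k|G_v|<p$ will be the arithmetic engine throughout part (2).

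For part (2) the aim is a faithful $\FF_p$-representation of $G_v$ of dimension at most $n$. First I would reduce to $O_{p'}(G)=1$: as $L=G_v^{\Gamma(v)}$ is quasiprimitive, a standard result on locally quasiprimitive graphs shows the intransitive normal subgroup $O_{p'}(G)$ is semiregular, so $G_v\cap O_{p'}(G)=1$ and the quotient $\Gamma_{O_{p'}(G)}$ is a connected locally-$L$ graph of order $k''p^n$ with $k''\leq k$, on which $G/O_{p'}(G)$ acts faithfully with vertex-stabiliser $G_v O_{p'}(G)/O_{p'}(G)\cong G_v$. Replacing $(\Gamma,G)$ by this quotient, I may assume $O_{p'}(G)=1$, so $F(G)=O_p(G)=:Q$, a $p$-group of order at most $p^n$. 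The crux is to show $G$ has no components, i.e.\ $F^*(G)=Q$. If $E(G)\neq 1$ then, since $O_{p'}(G)=1$, some simple section $T$ of $E(G)$ has $p\mid|T|$; a Sylow counting argument together with Burnside's normal $p$-complement theorem (both CFSG-free) forces $|T|_{p'}\geq p+1$, hence $|E(G)|_{p'}\geq p+1$. But applying the transitive-or-semiregular dichotomy to the normal subgroup $E(G)$ bounds $|E(G)|_{p'}$ by $k|G_v|<p$, a contradiction.

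With $F^*(G)=Q$ in hand, the generalised Fitting property $C_G(F^*(G))\leq F^*(G)$ yields $C_G(Q)\leq Q$, so $C_{G_v}(Q)\leq Q\cap G_v=1$ because $Q$ is a $p$-group and $G_v$ a $p'$-group. Thus $G_v$ acts faithfully by conjugation on $Q$, and since $|G_v|$ is coprime to $p$, a standard coprime-action fact makes this action already faithful on the Frattini quotient $Q/\Phi(Q)\cong\E_{p^r}$ with $r=\dim_{\FF_p}Q/\Phi(Q)\leq\log_p|Q|\leq n$. Therefore $G_v\lesssim\Aut(\E_{p^r})=\GL(r,p)\lesssim\GL(n,p)$, and transporting back along $G_v\cong G_v O_{p'}(G)/O_{p'}(G)$ finishes the argument. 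I expect the main obstacle to be ruling out components without CFSG: the role of the hypothesis $p>kc(L)$ is precisely to make the two estimates $|E(G)|_{p'}\geq p+1$ and $|E(G)|_{p'}<p$ incompatible, and obtaining the lower bound $|T|_{p'}\geq p+1$ cleanly (including the quasisimple-with-centre bookkeeping and the bipartite two-orbit exception to the dichotomy) is the delicate point.
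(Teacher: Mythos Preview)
Your argument is correct, but the paper takes a considerably shorter route by exploiting an observation you record but do not use: since $|G:P|=k|G_v|<p$, Sylow's theorem (with $n_p\equiv 1\pmod p$ and $n_p\mid |G:P|$) forces $n_p=1$, so the Sylow $p$-subgroup $P$ is already normal in $G$. This single line replaces your entire reduction to $O_{p'}(G)=1$ and the subsequent $F^*(G)$/component analysis. From $P\trianglelefteq G$ the paper shows $C_G(P)\cap G_v=1$ directly: writing $C_G(P)=\Z(P)\times J$ by Schur--Zassenhaus, $J$ is a normal $p'$-subgroup of $G$, and the very same ``at most two orbits'' dichotomy you invoke for $E(G)$ gives $J_v=1$ (orbits of $J$ would have size divisible by $p$, impossible for a $p'$-group). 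Both proofs then finish identically via the coprime action on the Frattini quotient $P/\Phi(P)$.

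So your $F^*$-theoretic approach works---the Sylow-counting bound $|T|_{p'}\geq p+1$ for a nonabelian simple $T$ with $p\mid|T|$ is exactly right, and after the reduction $Z(K)\leq O_p(G)$ handles the quasisimple bookkeeping---but it is more machinery than the situation requires. The hypothesis $p>kc(L)$ is strong enough to hand you a normal Sylow $p$-subgroup outright, which renders the quotient-graph step (with its attendant worries about multi-edges and preservation of the locally-$L$ property) and the elimination of components unnecessary.
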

\begin{proof}
Let $P$ be a Sylow $p$-subgroup of $G$. By vertex-transitivity we have $|G| = kp^n |G_v|$. Since $L$ is graph-restrictive, we have $k|G_v| \leqslant kc(L) < p$ hence $|P|=p^n$, $|G_v|$ is coprime to $p$ and  $P_v=1$. Moreover, since $|G:P| < p$, it follows from Sylow's Theorem that $P$ is normal in $G$.

Let $C$ be the centraliser of $P$ in $G$ and let $\Z(P)$ be the centre of $P$. Note that $C$ is normal in $G$ and $\Z(P) = P \cap C$. Since $P$ is normal in $G$, $\Z(P)$ is a Sylow $p$-subgroup of $C$ and the Schur-Zassenhaus Theorem \cite[6.2.1]{gorenstein} yields $C=\Z(P)\times J$ for some characteristic subgroup $J$ of $C$. Since $P_v=1$, it follows that $C_v=J_v$.

Suppose that $J_v\neq 1$. In particular, since $G_v^{\Gamma(v)}$ is quasiprimitive, $J$ has at most two orbits on the vertices of $\Gamma$. (See for example~\cite[Lemma~4]{ConderLiPot}.) Since $J$ is characteristic in $C$, it is normal in $G$, and thus these orbits have the same size. Since $p>2$, it follows that $p$ divides the size of these orbits, contradicting the fact that $|J|$ is coprime to $p$.

It follows that $C_v=J_v=1$, and thus $G_v$ is isomorphic to a subgroup $X$ of $\Aut(P)$. By \cite[5.3.5]{gorenstein}, we have that $X$ acts faithfully on the Frattini quotient $P/\Phi(P)$, which is elementary abelian of rank at most $n$. Thus $X$  is isomorphic to a subgroup of  $\mathrm{GL}(n,p)$.
\end{proof}

In view of Lemma~\ref{mainlemma}, we are  led to consider the following definition. For a finite group $X$, let
$$ \lambda(X) := \mathrm{min} \{n \mid X \text{ is involved in a finite subgroup of } \GL(n, \FF),~\FF \text{ a field with } \mathrm{char}(\FF)  \nmid |X| \}.  $$
 Note that, if $Y$ is involved in $X$, then $\lambda(Y)\leqslant\lambda(X)$. The next lemma shows that, when considering $\lambda(X)$, it suffices to work over the field of complex numbers.

\begin{lemma}\label{lemma:new}
If $X$ is a finite group, then there exists a 
finite subgroup  $G$ of $\GL(\lambda(X),\mathbb C)$ such that $X$ is involved in $G$ and every prime divisor of $|G|$ divides $|X|$.
\end{lemma}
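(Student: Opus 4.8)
The plan is to exhibit the required $G$ as a single, carefully chosen realisation of the degree $\lambda(X)$, and to force the two desired properties (complex coefficients and control of the primes) by a single minimality argument rather than by separate manipulations. By definition of $\lambda(X)$ there is a field $\FF$ with $\mathrm{char}(\FF)\nmid|X|$ and a finite $G_0\leq\GL(n,\FF)$, where $n:=\lambda(X)$, in which $X$ is involved; after replacing $\FF$ by the subfield generated by the matrix entries and conjugating, I may assume $\FF$ is a number field when $\mathrm{char}(\FF)=0$ and a finite field otherwise. Among \emph{all} such pairs $(\FF,G_0)$ — with $\mathrm{char}(\FF)\nmid|X|$ and $X$ involved in $G_0\leq\GL(n,\FF)$ — I would fix one with $|G_0|$ as small as possible. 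Writing $\pi$ for the set of prime divisors of $|X|$, the whole statement then reduces to proving that this minimal $G_0$ satisfies $\mathrm{char}(\FF)=0$ (so that $G_0\leq\GL(n,\mathbb C)$) and has no prime divisor outside $\pi$.

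Minimality first cleans up the group-theoretic structure. Since $X$ is involved in $G_0$, some section $K/M\cong X$ with $K\leq G_0$ realises it; as $K\leq\GL(n,\FF)$ still involves $X$, minimality forces $K=G_0$, so that $X\cong G_0/N$ is a genuine quotient for some $N\trianglelefteq G_0$. If $N$ admitted a proper supplement $H$ (that is, $HN=G_0$ with $H<G_0$), then $H/(H\cap N)\cong X$ would be involved in the smaller group $H\leq\GL(n,\FF)$; hence $N\leq\Phi(G_0)$, and since $\Phi(G_0)$ is nilpotent, $N$ is nilpotent. Consequently, for any prime $r\notin\pi$ dividing $|G_0|$ the Sylow $r$-subgroup lies in $N$ (as $r\nmid|G_0/N|=|X|$), is characteristic there, and so $R:=O_r(G_0)$ is a nontrivial \emph{normal} Sylow $r$-subgroup with $R\leq N$.

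The heart of the argument is to remove such a bad prime $r$ while preserving the degree $n$, thereby contradicting minimality; note that $\mathrm{char}=r$ is admissible precisely because $r\notin\pi$. The key sub-claim is that $G_0/R$ has a faithful $n$-dimensional representation in characteristic $r$: starting from a faithful $n$-dimensional module $V$ over a field of characteristic $r$, the $R$-socle filtration of $V$ is $G_0$-stable (as $R\trianglelefteq G_0$), each graded piece is $R$-trivial, and any element acting trivially on $\mathrm{gr}(V)$ acts unipotently, hence — being the product of its commuting semisimple and unipotent parts — is an $r$-element; thus the kernel of $G_0$ on $\mathrm{gr}(V)$ is exactly $O_r(G_0)=R$, yielding a faithful embedding $G_0/R\hookrightarrow\GL(n,\overline{\FF}_r)$ in which $X$ is still involved (because $R\leq N$) and whose order is strictly smaller. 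To reach characteristic $r$ I would split on $\mathrm{char}(\FF)$: if it is $\ell>0$ with $\ell\mid|G_0|$, then $\ell$ is itself such a bad prime and the graded argument applies directly with $r=\ell$; if $\ell\nmid|G_0|$, degree-preserving Brauer lifting gives an order-preserving realisation in characteristic $0$; and in characteristic $0$ I would choose a $G_0$-invariant lattice and reduce modulo a prime over $r$, the kernel meeting $G_0$ in an $r$-group (the congruence subgroup being pro-$r$), so that the graded argument then kills the remaining normal $r$-part. Applying the same Brauer lifting once every prime lies in $\pi$ moves the final minimal $G_0$ into $\GL(n,\mathbb C)$. The main obstacle is exactly the graded-module computation identifying the kernel on $\mathrm{gr}(V)$ with $O_r(G_0)$, since this is what lets one delete a prime without disturbing the degree; the supporting ingredients — reduction modulo $r$ with $r$-group kernel, and degree-preserving lifting from characteristic coprime to the group order to characteristic $0$ — are standard but indispensable.
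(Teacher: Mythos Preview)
Your argument is correct, but it takes a more elaborate route than the paper's. Both proofs begin identically: choose a realisation of minimal order, deduce that $X$ is a genuine quotient $G_0/N$, and show that $N$ is nilpotent (you via $N\leq\Phi(G_0)$, the paper via the Frattini argument on Sylow subgroups of $N$; these are essentially the same). The divergence is in eliminating a prime $r\nmid|X|$ dividing $|G_0|$. You change the field: pass to characteristic $r$ (by reduction modulo a prime over $r$, after first arranging characteristic $0$), then use the $R$-socle filtration to obtain a faithful $n$-dimensional representation of the smaller group $G_0/R$ over $\overline{\FF}_r$, contradicting minimality. The paper instead stays inside the \emph{same} $\GL(n,\FF)$: writing $N=P\times Q$ with $P$ the Sylow $r$-part, Schur--Zassenhaus applied to $G_0/Q$ produces a complement to $P$, and its preimage is a proper subgroup $H<G_0$ with $H/Q\cong X$, again contradicting minimality. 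Thus the paper never needs reduction modulo $r$, invariant lattices, congruence kernels, or graded modules; one application of Schur--Zassenhaus suffices, and only at the very end does the field change (lifting to $\mathbb{C}$ once $\mathrm{char}(\FF)\nmid|G_0|$). Your approach has the virtue of being more ``geometric'' and would generalise to settings where complements need not exist, but here it is heavier machinery than required. One minor imprecision: the subfield generated by the matrix entries of a finite group in characteristic $0$ need not be a number field; you need a specialisation or realisability-over-$\overline{\mathbb{Q}}$ argument to justify that step, though this is standard.
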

\begin{proof}
By definition, there is a field $\FF$ with $\mathrm{char}(\FF) \nmid |X|$ and a finite   subgroup $G$ of $\GL(\lambda(X),\FF)$ such that, for some normal subgroup $K$ of $G$, we have $G/K \cong X$. Choose $G$ such that $|G|$ is minimal. Without loss of generality, we may assume that $\FF$ is algebraically closed.

We claim that $K$ is nilpotent.  (The argument used to prove this claim is taken from the proof of \cite[Lemma 5.5A(ii)]{dixon-mortimer}).  Let $p$ be a prime and let $P$ be a Sylow $p$-subgroup of $K$. The Frattini Argument yields $G=\mathrm N_G(P) K$ and hence $\mathrm N_G(P)/ \mathrm N_K(P) \cong N_G(P)K/K \cong G/K \cong X$. The minimality of $|G|$  implies that $G=\mathrm N_G(P)$. In particular, $P$ is normal in $K$ and hence $K$ is nilpotent.

We now show that every prime divisor of $|K|$ divides $|X|$. Let $p$ be a prime dividing $|K|$ and let $P$ be a Sylow $p$-subgroup of $K$. Since $K$ is nilpotent, we have $K=P \times Q$ for some characteristic subgroup $Q$ of $K$. If $p$ does not divide $|X|$, then the Schur-Zassenhaus Theorem \cite[6.2.1]{gorenstein} yields $G/Q \cong P \rtimes X$, and thus $G$ has a proper subgroup involving $X$, contradicting the minimality of $|G|$. 

In particular, every prime divisor of $|G|$ divides $|X|$ and thus $\mathrm{char}(\FF)$ does not divide $|G|$. Since $\FF$ is algebraically closed, this implies that the degrees of representations of $G$ over $\FF$ are the same as the degrees of  representations of $G$ over $\mathbb C$, see for instance \cite[Chapter 15]{Isaacs} and, in particular,  \cite[Theorem 15.13]{Isaacs}. We thus obtain a representation of $G$ over $\mathbb C$ of dimension $\lambda(X)$, completing the proof.
\end{proof}

\subsection{Locally non-affine pairs}\label{sec:nonaffine}
The finite quasiprimitive groups are classified (see \cite{praegerquasip}). If $G$ is such a group, then its socle has the form $T^\ell$ for some finite simple group $T$. If $T$ is abelian, then $G$ is called \emph{affine}; otherwise, we say that $G$ is \emph{non-affine}.  In this section, we consider the locally non-affine case of Theorem~\ref{main} (and, in fact, we prove a stronger result).

For $n\geqslant 1$, let 
\begin{equation}
\label{j defn}
J(n)=(n!)\cdot12^{n(\pi(n+1)+1)}
\end{equation}
 where $\pi(k)$ denotes the number of primes less than or equal to $k$.

\begin{lemma}\label{simple}
If $X$ is a finite group with trivial soluble radical, then $|X| \leqslant J(\lambda(X))$.
\end{lemma}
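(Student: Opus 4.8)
The plan is to realise $X$ inside a complex linear group of degree $\lambda(X)$ and then transfer an explicit Jordan bound from that group down to $X$, using the hypothesis on the soluble radical to kill the abelian part. By Lemma~\ref{lemma:new}, there is a finite subgroup $G \leqslant \GL(\lambda(X),\mathbb C)$ in which $X$ is involved; write $X \cong K/M$ with $M \trianglelefteq K \leqslant G$. Realising $X$ over $\mathbb C$ (rather than over the field furnished by the definition of $\lambda$) is the whole purpose of invoking Lemma~\ref{lemma:new}: it lets me apply Jordan's theorem in its classical characteristic-zero form. Concretely, I would use the explicit version asserting that $G$ possesses an abelian normal subgroup $A$ with $[G:A] \leqslant J(\lambda(X))$, the defining formula~\eqref{j defn} of $J(n)$ being exactly the Jordan bound I have in mind.

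With this in hand, the hypothesis on the soluble radical does the rest. Since $A \trianglelefteq G$ and $K \leqslant G$, the intersection $A \cap K$ is an abelian normal subgroup of $K$, so its image $(A \cap K)M/M$ is an abelian --- hence soluble --- normal subgroup of $X \cong K/M$. As $X$ has trivial soluble radical, this image is trivial, that is, $A \cap K \leqslant M$. Therefore
\[ |X| = [K:M] \leqslant [K : A\cap K] = [KA:A] \leqslant [G:A] \leqslant J(\lambda(X)), \]
where I used $A \cap K \leqslant M \leqslant K$ for the first inequality, the second isomorphism theorem $K/(A\cap K) \cong KA/A$ for the middle equality (valid since $A \trianglelefteq G$ makes $KA$ a subgroup), and $KA \leqslant G$ thereafter. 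This is precisely the claimed inequality.

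The transfer step above is routine, so the substance of the lemma lives entirely in the explicit Jordan bound, and pinning down a fully explicit (and, in keeping with the spirit of these preliminaries, CFSG-free) form of Jordan's theorem matching~\eqref{j defn} is the main obstacle. I expect the shape of $J(n)$ to reflect the standard anatomy of such estimates: the factor $n!$ arising from the permutation/monomial part of the group, and $12^{n(\pi(n+1)+1)}$ from the contributions of the primes $p \leqslant n+1$, which are essentially the only ones relevant to the non-abelian part of the Jordan quotient, larger primes contributing only to the abelian normal subgroup. Producing such a bound through classical (Blichfeldt/Schur-type) arguments on element orders and on $p$-subgroups of $\GL(n,\mathbb C)$, rather than through the structure of $X$ itself, is where the real work would lie.
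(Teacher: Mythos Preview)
Your proposal is correct and follows the paper's approach almost exactly: invoke Lemma~\ref{lemma:new} to realise $X$ as a section of a finite $G\leqslant\GL(\lambda(X),\mathbb{C})$, apply Jordan's theorem to obtain an abelian $A\trianglelefteq G$ with $[G:A]\leqslant J(\lambda(X))$, and use the trivial-soluble-radical hypothesis to bound $|X|$ by $[G:A]$. Two minor remarks: the paper's transfer step is one line shorter because it uses that the \emph{proof} of Lemma~\ref{lemma:new} actually yields $X\cong G/K$ as a quotient (so $A\leqslant K$ directly), and the explicit bound $J(n)$ is not something you need to derive yourself --- the paper simply cites \cite[Theorem~14.12]{Isaacs}.
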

\begin{proof}
 By Lemma~\ref{lemma:new}, there exists a finite subgroup  $G$ of $\GL(\lambda(X),\mathbb C)$ such that $G$ has a normal subgroup $K$ with $G/K\cong X$. By a theorem of Jordan  \cite[Theorem 14.12]{Isaacs}, there exists an abelian normal subgroup $A$ of $G$ such that $|G:A| \leqslant J(\lambda(X))$. Since $G/K$ has trivial soluble radical, we have $A\leqslant K$ and hence $|X|=|G:K|\leqslant |G:A| \leqslant J(\lambda(X))$, as desired.
\end{proof}

\begin{theorem}\label{theo:nonaffine}
Let $k$, $n$ and $d$ be positive integers  with $d> J(n)$. Let $(\Gamma,G)$ be a locally-$L$ pair such that $\Gamma$ has order $kp^n$ for some prime $p$, $L$ has degree $d$ and is graph-restrictive and quasiprimitive. If $L$  is non-affine, then $p\leqslant kc(L)$.
\end{theorem}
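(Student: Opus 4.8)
The plan is to argue by contradiction: I assume that $p>kc(L)$ and derive a contradiction with the hypothesis $d>J(n)$, which will force $p\leqslant kc(L)$. The assumption $p>kc(L)$ places us exactly in the setting of Lemma~\ref{mainlemma}, with the graph-restrictive constant of $L$ as $c(L)$. So the first step is to apply that lemma to conclude that $|G_v|$ is coprime to $p$ and that $G_v$ is isomorphic to a subgroup of $\GL(n,p)=\GL(n,\FF_p)$.

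The key point is that Lemma~\ref{mainlemma} supplies precisely the condition needed to estimate the invariant $\lambda$. Indeed, since $G_v$ embeds in $\GL(n,\FF_p)$ and $\char(\FF_p)=p$ does not divide $|G_v|$, taking $\FF=\FF_p$ in the definition of $\lambda$ shows that $G_v$ is involved in a finite subgroup of $\GL(n,\FF_p)$ over a field of characteristic coprime to $|G_v|$, whence $\lambda(G_v)\leqslant n$. Now $L=G_v^{\Gamma(v)}$ is a quotient of $G_v$, its kernel being $G_v^{[1]}$, so $L$ is involved in $G_v$; by the monotonicity of $\lambda$ recorded after its definition, $\lambda(L)\leqslant\lambda(G_v)\leqslant n$.

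Next I would verify that $L$ has trivial soluble radical, so that Lemma~\ref{simple} applies to $X=L$. Let $R$ be the soluble radical of $L$. If $R\neq1$, then $R$ contains a minimal normal subgroup $M$ of $L$, and since $R$ is soluble, $M$ is elementary abelian. However, $L$ is non-affine, so $\soc(L)=T^\ell$ with $T$ a nonabelian simple group, a group whose soluble radical is trivial; as $M$ is a nontrivial abelian normal subgroup of $L$ contained in $\soc(L)$, this is a contradiction. Hence $R=1$.

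Finally I would assemble the bound. Applying Lemma~\ref{simple} to $L$ gives $|L|\leqslant J(\lambda(L))$, and since $J$ is non-decreasing and $\lambda(L)\leqslant n$, we obtain $|L|\leqslant J(n)$. On the other hand, $L$ is transitive of degree $d$, so $|L|\geqslant d>J(n)$, a contradiction. Thus $p\leqslant kc(L)$, as required. The heavy lifting is all in the preparatory lemmas; the only delicate points specific to this theorem are the observation that the coprimality conclusion of Lemma~\ref{mainlemma} is exactly what lets one pass from $G_v$ (and hence $L$) to the invariant $\lambda$, and the short verification that a non-affine quasiprimitive group has trivial soluble radical.
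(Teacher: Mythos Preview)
Your proof is correct and follows essentially the same route as the paper's. The only cosmetic difference is that the paper applies Lemma~\ref{simple} to the socle $S=\soc(L)$ (whose trivial soluble radical is immediate, being a direct product of non-abelian simple groups) rather than to $L$ itself, and then uses the transitivity of $S$ to obtain $d\leqslant|S|\leqslant J(\lambda(S))\leqslant J(n)$.
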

\begin{proof}
We assume for a contradiction  that $p> kc(L)$. Let $v$ be a vertex of $\Gamma$. By Lemma~\ref{mainlemma}, $|G_v|$ is coprime to $p$ and $G_v$ is isomorphic to a subgroup of $\GL(n,p)$. In particular, $\lambda(L)\leqslant\lambda(G_v)\leqslant n$.

Let $S$ be the socle of $L$. Since $L$ is quasiprimitive, $S$ is transitive. Since $L$ is non-affine, $S$ is a direct product of non-abelian simple groups and thus has trivial soluble radical. Lemma~\ref{simple} then implies that $d \leqslant |S|\leqslant J(\lambda(S))\leqslant J(\lambda(L))\leqslant J(n)$, contradicting the fact that $d> J(n)$.
\end{proof}

\begin{remark}

It was conjectured by Praeger \cite{praegerconjecture} that finite quasiprimitive groups are graph-restrictive. The validity of this conjecture would render the graph-restrictive assumption in the hypothesis of Theorem~\ref{theo:nonaffine}  superfluous. The conjecture remains open but has been shown to hold in certain cases \cite{PSVRestrictive,pablo,trofweiss1,trofweiss2}. 
\end{remark}

\subsection{Locally affine pairs}\label{sec:affine}

In this section we consider locally-$L$ pairs where $L$ is a $2$-transitive   affine group. We first consider the case when $L$ is soluble. The  finite  soluble  $2$-transitive groups were classified by Huppert~\cite{Huppert}. A consequence of this classification is that, up to finitely many exceptions, all such groups are subgroups of the one-dimensional affine semilinear group, which we now define. 

Let $d=r^f$ be a power of a prime $r$. We denote the field of order $d$ by $\FF_d$  and the Galois group of the field extension $\FF_{r^f}/\FF_r$ by $\mathrm{Gal}(\FF_{r^f}/\FF_r)$. The group $\AGAL(1,d)$ is  $\langle T_{u,\alpha,\sigma} \mid u \in \FF_d$, $\alpha \in \FF_d^\#, \sigma \in  \mathrm{Gal}(\FF_{r^f}/\FF_r)  \rangle$ where $T_{u,\alpha,\sigma}$ is the permutation of $\FF_d$ defined by
\begin{equation}\nonumber
T_{u,\alpha,\sigma} :  x \mapsto \alpha(x^\sigma) + u, \hspace{0.5cm} x\in \FF_d.
\end{equation}
The permutation group $\AGAL(1,d)$ is $2$-transitive with a regular normal subgroup 
$$\V_d=\langle T_{u,1,1} \mid u \in \FF_d\rangle \cong \E_d$$ 
and point-stabiliser conjugate to 
$$\GAL(1,d):=\langle T_{0,\alpha,\sigma} \mid \alpha \in \FF_d^\#,\sigma \in   \mathrm{Gal}(\FF_{r^f}/\FF_r)     \rangle\cong \C_{d-1}\rtimes\C_f.$$
The point-stabiliser $\GAL(1,d)$ contains the normal subgroup 
$$\GL(1,d):=\langle T_{0,\alpha,1} \mid \alpha \in \FF_d^\#\rangle\cong \C_{d-1},$$
while 
$$\AGL(1,d):=  \langle T_{u,\alpha,1} \mid u \in \FF_d, \alpha \in \FF_d^\# \rangle =  \V_d \rtimes\GL(1,d)\cong \E_d\rtimes \C_{d-1},$$ 
 is a $2$-transitive normal subgroup of $\AGAL(1,d)$.

In the following omnibus proposition, we collect a few results concerning $2$-transitive subgroups of $\AGAL(1,d)$.

\begin{proposition}\label{BigProp}

Let $d=r^f$ be a power of a prime $r$, let $L$ be a $2$-transitive subgroup of $\AGAL(1,d)$ and let $X=L\cap \AGL(1,d)$.
The following hold:
\begin{enumerate}
\item $X=\V_d\rtimes X_0$; \label{newnew3}
\item $X_0$ is a subgroup of $\GL(1,d)$ of index at most $f$; \label{newnew}
\item every element of $\GAL(1,d)\setminus\GL(1,d)$ has order at most $\frac{d-1}{f}$ unless $r=2$ and $2\leq f\leq 6$; \label{newLemma}

\item every  element of $L_0$ has order at most $|X_0|$; \label{newnew2}
\item if $\FF$ is a field with $\mathrm{char}(\FF) \nmid |X|$ and $n$ is a positive integer such that $X \lesssim \GL(n,\FF)$ then  $n\geqslant |X_0|$. \label{AGL}
\end{enumerate} 
Moreover, if $(\Gamma,G)$ is a locally $L$-pair, $r$ and $f$ are coprime and  $v$ is a vertex of $\Gamma$ then
\begin{enumerate} \setcounter{enumi}{5}
 \item  $G_v$ contains a subgroup isomorphic to $X$. \label{agl subgroup}
\end{enumerate} 
\end{proposition}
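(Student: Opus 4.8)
The plan is to pass to the local action and lift the affine part $X=\V_d\rtimes X_0$ one layer at a time. Write $\phi\colon G_v\to G_v^{\Gamma(v)}=L$ for the natural permutation representation and set $K=G_v^{[1]}=\ker\phi$, so that $G_v/K\cong L$; recall also that $X=\V_d\rtimes X_0$ with $X_0\le\GL(1,d)$ cyclic of order $m:=|X_0|$ dividing $d-1$. The coprimality of $r$ and $f$ enters right at the start: since $X=L\cap\AGL(1,d)\trianglelefteq L$ and $L/X$ embeds in the Galois group $\C_f$, we have $r\nmid|L:X|$; as $|\V_d|=d=r^f$ while $m\mid d-1$ is coprime to $r$, it follows that $\V_d$ is a Sylow $r$-subgroup of $L$. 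Writing $L=\V_d\rtimes L_0$ for a point-stabiliser $L_0$, this forces $L_0$ to be an $r'$-group, i.e.\ every point-stabiliser of $L$ has order coprime to $r$.

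The crucial first step is then the local rigidity statement that $K=G_v^{[1]}$ is an $r'$-group, which I would prove by a propagation argument along $\Gamma$. Suppose $x\in K$ is a nontrivial $r$-element; then $x$ fixes $v$ and $\Gamma(v)$ pointwise. For each neighbour $w\in\Gamma(v)$, the image of $x$ in $G_w^{\Gamma(w)}\cong L$ fixes the point $v\in\Gamma(w)$, hence lies in a point-stabiliser of $G_w^{\Gamma(w)}$, which we have just seen is an $r'$-group; being an $r$-element, this image is trivial, so $x\in G_w^{[1]}$ and $x$ fixes $\Gamma(w)$ pointwise. Iterating this along edges and using connectivity of $\Gamma$ shows that $x$ fixes every vertex, so $x=1$, a contradiction. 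Once $K$ is known to be an $r'$-group, the Schur--Zassenhaus Theorem applied inside the normal preimage $\phi^{-1}(\V_d)=K\rtimes R_0$ produces a complement $R_0\cong\V_d\cong\E_d$; as $K$ is an $r'$-group, $R_0$ is in fact a Sylow $r$-subgroup of $G_v$, and it acts regularly on $\Gamma(v)$.

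It remains to adjoin a copy of $X_0$. By the Frattini Argument $G_v=K\,\mathrm{N}_{G_v}(R_0)$, so $\phi$ maps $N:=\mathrm{N}_{G_v}(R_0)$ onto $L$; moreover $\phi$ carries conjugation of $N$ on $R_0$ to the action of $L$ on $\V_d$, so a preimage in $N$ of a generator of $X_0$ acts on $R_0$ by a fixed-point-free automorphism of order $m$. Hence, if such a preimage can be chosen of order exactly $m$, it generates together with $R_0$ a subgroup $R_0\rtimes\C_m\cong\V_d\rtimes X_0=X$, completing the proof. Setting $K_0=\Cent_K(R_0)$ (which one checks equals $\mathrm{N}_K(R_0)$, with $R_0K_0=R_0\times K_0$), everything reduces to splitting $\phi^{-1}(X_0)\cap N$ over the $r'$-group $K_0$ with cyclic quotient $X_0\cong\C_m$. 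I expect this final splitting to be the main obstacle: the orders $|K_0|$ and $m$ need not be coprime (for example, in the locally-$\Sym(3)$ cubic case with $G_v\cong\Sym(4)\times\C_2$ one finds $K_0\cong\C_2$ and $m=2$), so the Schur--Zassenhaus Theorem does not apply directly. The plan is to exploit that $X_0$ is cyclic and acts fixed-point-freely on $R_0$: it suffices to lift a generator to an element of $N$ of order exactly $m$, and establishing the existence of such a lift by a careful analysis of $K_0$ (reducing to abelian chief factors and invoking a Gaschütz-type argument) is where the real work lies.
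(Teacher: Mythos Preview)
Your proposal addresses only part~(6); parts (1)--(5) are purely structural facts about subgroups of $\AGAL(1,d)$ and need their own (more routine) arguments, and in fact part~(4) is exactly what the paper uses to close the gap you leave open.

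On part~(6), your propagation argument that $K=G_v^{[1]}$ is an $r'$-group is correct and pleasantly elementary. The paper takes a different route here: for $d\geq 5$ it quotes a theorem of Weiss giving the much stronger conclusion $G_{uv}^{[1]}=1$ (the case $d=3$ is handled separately via Djokovi\'c--Miller), and then deduces that $G_v^{[1]}$ embeds in $G_{uv}/G_u^{[1]}\leq\GAL(1,d)$, hence is an $r'$-group. Your argument reaches the same intermediate conclusion without the external input, and your extraction of the Sylow $r$-subgroup $R_0\cong\V_d$ via Schur--Zassenhaus is fine.

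The genuine gap is exactly the one you flag: lifting a generator of $X_0$ to an element of order $m=|X_0|$. Your proposed Gasch\"utz-type splitting of $1\to K_0\to N\cap\phi^{-1}(X_0)\to X_0\to 1$ has no evident reason to work once $\gcd(|K_0|,m)>1$, and you have not supplied one. The paper sidesteps this entirely, and it is precisely the extra strength of Weiss's theorem that makes this possible. From $G_{uv}^{[1]}=1$ one gets a subdirect embedding $G_{uv}\hookrightarrow L_0\times L_0$ via the two local actions at $u$ and $v$; choosing $g\in G_{uv}$ whose first coordinate generates $X_0$, the second coordinate lies in $L_0$ and by part~(4) has order at most $|X_0|$, forcing $|g|=|X_0|$. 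The paper also shows (via a normal-closure argument with $G_u^{[1]}$) that the Sylow $r$-subgroup $R$ is normal in $G_v$, so $g$ automatically normalises it and $\langle R,g\rangle\cong \V_d\rtimes X_0=X$. In short: by avoiding Weiss you lose the control over $G_{uv}$ that the paper uses, together with part~(4), to produce the required element of exact order $m$; your more elementary first step does not carry enough information to finish.
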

\noindent \textit{Proof.}
We prove each claim in order.
\begin{enumerate}

\item The claim is clearly true if $d=4$ and thus we assume that $d\neq 4$. Since $L$ is soluble and $2$-transitive,  its socle $\soc(L)$ is elementary abelian and transitive, and thus regular with order $r^f$. We first show that $\soc(L)=\V_d$. Suppose for a contradiction that $\soc(L) \neq \V_d$. 

Note that $\soc(L)/(\soc(L) \cap V_d)\cong \soc(L) \V_d / \V_d\leq \AGAL(1,d)/\V_d\cong \GAL(1,d)$. Since the Sylow $r$-subgroups of $\GAL(1,d)$ are cyclic, we have $|\soc(L) \cap V_d | = r^{f-1}$. On the other hand, there exists  $vt \in \soc(L)$  with $v \in \V_d$ and $1\neq t\in \GAL(1,d)$. Since $vt$ has order $r$, $t$ must have order $r$ and therefore a conjugate $t'$ of  $t$ in $\GAL(1,d)$ lies in $\mathrm{Gal}(\FF_{r^f}/\FF_r)$. Now both $\V_d$ and $\soc(L)$ are abelian hence  $\soc(L) \cap \V_d$ is a subgroup of the centraliser $\Cent_{\V_d}(t)$ and we have $|\Cent_{\V_d}(t)|=|\Cent_{\V_d}(t')|$. By the Galois correspondence,  $\Cent_{\V_d}(t')$ is the subfield $\FF_{r^{f/r}}$ of $\FF_{r^f}$. This yields $r^{f-1} \leqslant r^{f/r}$, which is a contradiction since $d\neq 4$. We have shown that $\soc(L)=\V_d$. This implies that $\V_d\leq X$ and the result follows.

\item By~(\ref{newnew3}), $\V_d\leq L$ and thus $L_0\leq\GAL(1,d)$ and $X_0=L_0\cap \GL(1,d)$. Since $|\GAL(1,d):\GL(1,d)|=f$, we have $|L_0:X_0|\leq f$. Moreover, $L$ is $2$-transitive hence $|L_0|\geq d-1=|\GL(1,d)|$ and thus $|\GL(1,d):X_0|\leq f$.
\item 
Let $x$ be a generator of $\GL(1,d)$, let  $\sigma$ be a generator of $\mathrm{Gal}(\FF_d/\FF_r)$ and let $y=T_{0,1,\sigma}$. Now  $\GAL(1,d)=\langle x\rangle\rtimes\langle y\rangle$ where the action of $\langle y \rangle$  on $\langle x \rangle$ is the action of the Galois group of the field extension $\FF_{r^f}/\FF_r$ on $\FF_d^\#$.  We will show that any element of $\GAL(1,d)\setminus\langle x\rangle$ has order at  most $\frac{d-1}{f}$. Let $z$ be such an element and write $z=x'y'$ with $x' \in \langle x \rangle$ and $y' \in \langle y \rangle$. Note that $\langle z \rangle \cap \langle x \rangle$ is centralised by $z$ and $x'$ and thus by  $y'$. Let $e=|y'|$ and $k=\frac{f}{e}$. By the Galois  correspondence, the elements of $\mathbb F_d$ that are fixed by $y'$ are precisely those in the subfield $\mathbb F_{r^k}$.   It follows that $|\langle z \rangle \cap \langle x \rangle|$ divides $r^k-1$ and hence 
$$ |z | = |\langle z  \rangle :  \langle z \rangle \cap \langle x \rangle|| \langle z \rangle \cap \langle x \rangle| = |y'| | \langle z \rangle \cap \langle x \rangle| \leq e(r^{k}-1).$$
Since $f=ek$ and $e\geqslant 2$,  it is an easy exercise to show that $e(r^{k}-1)\leq \frac{r^f-1}{f}=\frac{d-1}{f}$ unless $r=2$ and $f\leq 6$.

\item By~(\ref{newnew}), $|X_0|\leq \frac{d-1}{f}$ and thus the claim follows by~(\ref{newLemma}) unless $r=2$ and $2\leq f\leq 6$. In the latter case, the claim can be checked by computer (for example, with the help of {\sc Magma}~\cite{Magma}).

\item Let $U$ be the natural $\GL(n,\FF)$-module considered as an $X$-module.  Note that $X$ is a Frobenius group with kernel $\V_d$ and complement $X_0$. Since the characteristic of $\FF$ does not divide $|X|$,  Maschke's Theorem~\cite[Theorem 1.9]{Isaacs} implies that $U$ is a completely reducible $X$-module. Moreover, since $\V_d$ acts non-trivially on $U$, we have  $U=\Cent_U(\V_d)\oplus W$, where $\Cent_U(\V_d)$ is the submodule of $U$ fixed by every element of $\V_d$ and $W$ is a non-zero submodule $W$ of $U$. Now $\Cent_W(\V_d)=0$ and  hence we may apply \cite[Theorem 15.16]{Isaacs}, which shows that the dimension of $W$ is divisible by $|X_0|$. The result follows.

\item Let $u$ be a neighbour of $v$ in $\Gamma$ and let $G_{uv}^{[1]}=G_u^{[1]}\cap G_v^{[1]}$. Note that $L\cong G_v/G_v^{[1]}$. In particular, if $G_v^{[1]}=1$, then the result is immediate. We therefore assume that $G_v^{[1]} \neq 1$ and thus $d\geq 3$. If $d=3$ then $\AGAL(1,d)=\AGL(1,d)\cong\Sym(3)$ and thus $L=X\cong\Sym(3)$ and the result follows from \cite{djokmiller}. 

Since $r$ and $f$ are coprime, we may thus assume that $d\geqslant 5$. By \cite[Theorem (ii)]{weissp}, we have $G_{uv}^{[1]}= 1$. In particular,  $G_v^{[1]}$ is isomorphic to a subgroup of $G_{uv}/G_u^{[1]}$, and the latter group is itself isomorphic to a subgroup of $\GAL(1,d)\cong \C_{d-1}\rtimes\C_f$. Now $f$ and $d-1$ are coprime to $r$, hence $|G_v^{[1]}|$ is coprime to $r$.  Let $R$ be a Sylow $r$-subgroup of $G_v$. Since the order of $G_v^{[1]}$ is coprime to $r$, we see that $R G_v^{[1]}/G_v^{[1]}$ is a Sylow $r$-subgroup of $G_v/G_v^{[1]}$. Thus $R G_v^{[1]}$ is a normal subgroup of $G_v$.  We claim that $R$ is normal in $G_v$.

Since $G_u^{[1]}$ and $G_v^{[1]}$ are normal subgroups of $G_{uv}$, it follows that $[G_v^{[1]},G_u^{[1]}] \leqslant G_{uv}^{[1]}=1$. Let $T$ be the normal closure in $G_v$ of $G_u^{[1]}$ and observe that $[T, G_v^{[1]} ] = 1$. Since $T$ is normal in $G_v$ and $T\nleqslant G_v^{[1]}$ (for otherwise $G_u^{[1]} \leqslant G_v^{[1]}$ and this yields $G_v^{[1]}=1$), by the quasiprimitivity of $L$, we have $R G_v^{[1]} \leqslant T G_v^{[1]}$. Since $|TG_v^{[1]}:T|$ divides $|G_v^{[1]}|$, which is coprime to $r$, $T$ contains a Sylow $r$-subgroup of $TG_v^{[1]}$. The normality of $T$ in $G_v$ implies $T$ contains every Sylow $r$-subgroup of $TG_v^{[1]}$. It follows that $R \leqslant T$ and thus $R$ centralises $G_v^{[1]}$. Now $R G_v^{[1]} = R\times G_v^{[1]}$, thus $R$ is characteristic in $R G_v^{[1]}$, and therefore $R$ is normal in $G_v$. Moreover $G_v = R \rtimes G_{uv}$ since $|G_{uv}|=|G_{uv}:G_v^{[1]}||G_v^{[1]}|$ is coprime to $r$.

Since $G_{uv}^{[1]}=1$ we see that $G_{uv}$ is isomorphic to a subgroup of $G_{uv}/G_v^{[1]} \times G_{uv}/G_u^{[1]}$ where 
$$G_{uv}/G_v^{[1]} \cong G_{uv}/G_u^{[1]}\cong L_0.$$
Note that $G_{uv}$ projects onto $L_{0}$ in both coordinates of the direct product.  Let $\pi: G_{uv} \mapsto L_0$ be the projection onto the first coordinate and let $g$ be an element of $G_{uv}$ of minimal order such that $\pi(g)$  generates $X_0$. Write $g=(x,g_2)$ with $g_2\in L_0$. By~(\ref{newnew2}), $g_2$ has order at most $|X_0|$ and thus $g$ has order $|X_0|$. It follows that $\langle R,g\rangle=R \rtimes\langle g\rangle\cong  \V_d\rtimes X_0=X$. \hfill \qed \end{enumerate}


\vspace{0.2cm}

To complete the case when $L$ is soluble, we will also need the following.

\begin{lemma}
\label{el ab sections of glnp}
If $r^f$ is a power of a prime $r$, then $\lambda(\E_{r^f})\geqslant \frac{2f}{3}$.
\end{lemma}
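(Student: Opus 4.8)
The plan is to reduce, via Lemma~\ref{lemma:new}, to a purely representation-theoretic statement about finite $p$-groups, and then to prove that statement by induction on the representation degree.

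First I would apply Lemma~\ref{lemma:new} with $X=\E_{r^f}$. Since $|X|=r^f$, the group $G$ it produces is a finite $r$-group with a faithful representation of degree $\lambda(\E_{r^f})$ over $\mathbb C$, and $\E_{r^f}$ is involved in $G$, say $\E_{r^f}\cong K/N$ with $N\trianglelefteq K\leq G$. Restricting the representation to $K$ shows that $K$ is itself a finite $r$-group admitting a faithful complex representation of degree $n:=\lambda(\E_{r^f})$. Writing $d(P)=\dim_{\FF_r}P/\Phi(P)$ for the minimal number of generators of an $r$-group $P$, we have $f=d(K/N)\leq d(K)$, since the number of generators of a quotient cannot exceed that of the group. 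Hence the whole lemma follows from the key claim: \emph{every finite $p$-group $P$ admitting a faithful representation of degree $m$ over $\mathbb C$ satisfies $d(P)\leq\tfrac32 m$} (apply it with $p=r$, $P=K$, $m=n$ to get $f\leq d(K)\leq\tfrac32 n$, i.e. $n\geq\tfrac{2f}{3}$).

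I would prove the key claim by induction on $m$, using the decomposition of a faithful $\mathbb C P$-module $V$ of dimension $m$ into irreducible constituents (possible by Maschke's Theorem). If $V$ is reducible, pick an irreducible constituent $U$ of dimension $m_1\geq 1$ and let $K=\ker(P\to\GL(U))$. Then $K$ is normal in $P$, and since $P$ is faithful on $V$ while $K$ centralises $U$, the group $K$ acts faithfully on a $P$-invariant complement $W$ of dimension $m-m_1<m$; by induction $d(K)\leq\tfrac32(m-m_1)$. Moreover $P/K$ acts faithfully and irreducibly on $U$, so the irreducible case (below) gives $d(P/K)\leq\tfrac32 m_1$. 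As $d(P)\leq d(K)+d(P/K)$, we obtain $d(P)\leq\tfrac32 m$. The base case is that $V$ is irreducible, treated next.

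The heart of the matter is thus the irreducible case: if $P$ has a faithful \emph{irreducible} complex representation of degree $p^a$, then $d(P)\leq 2a+1$; combined with the elementary inequality $2a+1\leq\tfrac32 p^a$ (valid for every prime $p$ and $a\geq 0$, with equality exactly at $p=2$, $a=1$), this yields $d(P)\leq\tfrac32 p^a$. Here $\Z(P)$ is cyclic, because a faithful irreducible representation sends it to scalars, so $d(P)\leq d(\Z(P))+d(P/\Z(P))=1+d(P/\Z(P))$ and it suffices to prove $d(P/\Z(P))\leq 2a$. When the representation is fully ramified over $\Z(P)$, i.e. vanishes off $\Z(P)$, one has $|P:\Z(P)|=p^{2a}$ and hence $d(P/\Z(P))\leq\log_p|P/\Z(P)|=2a$ at once. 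In general I would reduce to this case: passing through a maximal abelian normal subgroup and a Clifford/fully-ramified reduction (equivalently, using that $P/\Z(P)$ embeds projectively into the normaliser of a symplectic-type $p$-group, whose relevant $p$-part needs only $2a$ generators) bounds $d(P/\Z(P))$ by $2a$.

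I expect the main obstacle to be precisely this irreducible generation bound $d(P/\Z(P))\leq 2a$ in the non-fully-ramified case. The difficulty is that the natural Clifford-theoretic reductions lower the degree only after passing to \emph{subgroups}, whereas $d(\cdot)$ is badly behaved on subgroups: a subgroup of a $p$-group can require far more generators than the group itself. The way around this is to route every reduction through normal subgroups and quotients, where the safe inequality $d(P)\leq d(N)+d(P/N)$ is available, and to exploit the cyclic centre to control $P/\Z(P)$ directly. That the constant $\tfrac32$ is sharp—so no slack is available in these estimates—is witnessed by $P=(\mathrm{Q}_8\circ\C_4)^{t}\leq\GL(2t,\mathbb C)$, a $2$-group with $d(P)=3t$ and elementary abelian quotient $\E_{2^{3t}}$, realising $\lambda(\E_{2^{3t}})\leq 2t=\tfrac{2}{3}\cdot 3t$.
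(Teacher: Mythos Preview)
Your reduction via Lemma~\ref{lemma:new} and the inequality $f=d(K/N)\leq d(K)$ is exactly the paper's approach: the ``key claim'' you isolate---that a finite $p$-group with a faithful complex representation of degree $m$ has at most $\tfrac32 m$ generators---is precisely \cite[Theorem~A]{isaacsrank}, and the paper simply cites it. So the strategies coincide; the only difference is that you attempt to reprove Isaacs' theorem rather than invoke it.

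Your inductive reduction to the irreducible case via Maschke and the subadditivity $d(P)\leq d(N)+d(P/N)$ is correct and standard. The gap is in the irreducible case itself. You assert $d(P/\Z(P))\leq 2a$ when $P$ has a faithful irreducible representation of degree $p^a$, but you prove this only in the fully-ramified subcase; for the general case you offer a one-line sketch (``passing through a maximal abelian normal subgroup and a Clifford/fully-ramified reduction'') that you yourself flag as the ``main obstacle.'' That is not a proof, and the difficulty you identify is genuine: Clifford theory naturally hands you \emph{subgroups}, on which $d(\cdot)$ can increase, so routing the argument through normal subgroups and quotients requires real work---precisely the work carried out in \cite{isaacsrank}. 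To close the gap you must either execute that argument in full or, as the paper does, cite the reference. (Your sharpness example $(\mathrm{Q}_8\circ\C_4)^t$ is correct and confirms that no slack is available in the constant $\tfrac32$.)
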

\begin{proof}
By Lemma~\ref{lemma:new},  $\E_{r^f}$ is involved in a finite $r$-subgroup  $R$ of $\GL(\lambda(\E_{r^f}),\mathbb C)$. In particular, there is an integer $f'$ such that $f\leqslant f'$ and $R/\Phi(R) \cong \E_{r^{f'}}$. It then follows by  \cite[Theorem~A]{isaacsrank} that $f \leqslant f' \leqslant \frac{3n}{2}$, as required.
\end{proof}

For the insoluble case, we prove the following result.

\begin{lemma}\label{lem:E(H)}
There exists an increasing function $I : \mathbb{N} \rightarrow \mathbb{N}$ such that, if $H$ is a finite insoluble affine $2$-transitive group  then $|H| \leqslant I(\lambda(H))$.     
\end{lemma}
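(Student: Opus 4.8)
The plan is to combine the classification of finite $2$-transitive affine groups (Hering's theorem, which rests on CFSG) with the two order bounds already at our disposal, namely Lemma~\ref{el ab sections of glnp} and Lemma~\ref{simple}. Write $H=\V\rtimes H_0$, where $\V\cong\E_{p^a}$ is the regular elementary abelian socle of degree $p^a$ and $H_0\leq\GL(a,p)$ acts transitively on $\V\setminus\{0\}$; since $\V$ is abelian, $H$ is insoluble precisely when $H_0$ is insoluble. The aim is to bound $|H|=p^a|H_0|$ by an increasing function of $\lambda(H)$. Because $H_0\leq\GL(a,p)$ gives $|H_0|<p^{a^2}$ and hence $|H|<p^{a^2+a}$, it suffices to bound both $a$ and $p$ in terms of $\lambda(H)$.

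Bounding $a$ is immediate: $\V\cong\E_{p^a}$ is a subgroup of $H$, so it is involved in $H$ and therefore $\lambda(\E_{p^a})\leq\lambda(H)$; Lemma~\ref{el ab sections of glnp} then yields $\frac{2a}{3}\leq\lambda(\E_{p^a})\leq\lambda(H)$, so $a\leq\frac{3}{2}\lambda(H)$.

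Bounding $p$ is where Hering's theorem enters. Applied to the insoluble transitive linear group $H_0$, it gives that either (i) $H_0$ has a normal quasisimple subgroup $S$ of Lie type in the defining characteristic $p$ --- one of $\mathrm{SL}(n,q)$, $\mathrm{Sp}(2m,q)$, or $G_2(q)'$ with $q$ a power of $p$ --- or (ii) $H$ belongs to an explicit finite list of exceptions (with $p^a$ among finitely many small prime powers). In case (i), let $\bar S=S/\Z(S)$ be the associated nonabelian simple group. It is a section of $H_0$, and since $H_0\cong H/\V$ is itself a section of $H$, it follows that $\bar S$ is involved in $H$, whence $\lambda(\bar S)\leq\lambda(H)$. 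As $\bar S$ is nonabelian simple it has trivial soluble radical, so Lemma~\ref{simple} together with the (evident) monotonicity of $J$ gives $|\bar S|\leq J(\lambda(\bar S))\leq J(\lambda(H))$. Since $\bar S$ is of Lie type in characteristic $p$ we have $p\mid|\bar S|$, and therefore $p\leq J(\lambda(H))$. In case (ii) there are only finitely many groups $H$, so $p$ is bounded by an absolute constant $p_0$.

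Putting these together, $|H|<p^{a^2+a}$ with $a\leq\frac{3}{2}\lambda(H)$ and $p\leq\max\{p_0,J(\lambda(H))\}$ furnishes the required bound; taking $I$ to be the resulting (manifestly increasing) function of $\lambda(H)$, enlarged if necessary so that it dominates the finitely many exceptional orders, completes the argument. The main obstacle is structural rather than computational: one must invoke Hering's classification correctly and, in the generic families, extract a nonabelian simple section whose order is divisible by $p$ and which is genuinely involved in $H$. This is exactly the step that converts the order bound of Lemma~\ref{simple} into a bound on the prime $p$; the finitely many exceptional degrees contribute only a constant, which is harmlessly absorbed into $I$.
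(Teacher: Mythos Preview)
Your proof is correct, but it follows a different path from the paper's.

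Both arguments rest on Hering's classification and Lemma~\ref{simple}. The paper proceeds more abstractly: it lets $R(H)$ be the soluble radical of $H$ and $T(H)$ the socle of $H/R(H)$, quotes \cite[Theorem~6.1]{Hering} to conclude that $T(H)$ is a non-abelian simple group, applies Lemma~\ref{simple} to get $|T(H)|\le J(\lambda(T(H)))\le J(n)$, and then invokes \cite[Corollary~6.3]{Hering}, which says that only finitely many $2$-transitive groups $H$ share a given $T(H)$. This immediately gives finiteness of the set $\{H:\lambda(H)\le n\}$ without ever explicitly bounding the degree $p^a$.

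Your approach instead bounds $p^a$ directly: you bound $a$ via Lemma~\ref{el ab sections of glnp} (an ingredient the paper's proof does not need here) and bound $p$ by extracting, from the explicit Hering list, a non-abelian simple section of Lie type in characteristic $p$ and applying Lemma~\ref{simple} to it. This is more hands-on and yields a bound one could in principle write down, whereas the paper's route is shorter but leans on \cite[Corollary~6.3]{Hering} as a black box. Either way the essential inputs are the same; your version simply unpacks the finiteness statement that the paper quotes.
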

\begin{proof}
Let $n \in \mathbb N$.  We will show that there is an upper bound on $|H|$ as $H$ runs over the finite insoluble affine $2$-transitive groups with $\lambda (H) \leqslant n$. This will allow us to define 
$$I(n) = \max\{|H| : H \textrm{ finite insoluble affine $2$-transitive, } \lambda(H) \leqslant n \}$$
with the required properties.

Let $H$ be such a group, let  $R(H)$ be the soluble radical of $H$ and let $T(H)$ be the socle of $H/{R(H)}$. By \cite[Theorem 6.1]{Hering}, $T(H)$ is a non-abelian simple group.  We have $\lambda(T(H))\leq\lambda(H)\leq n$, and it follows by Lemma~\ref{simple} that $|T(H)|\leqslant J(n)$. By~\cite[Corollary 6.3]{Hering}, for a given finite non-abelian simple group $T$, there are only finitely many finite  $2$-transitive groups $H$ with $T(H) \cong T$. This concludes the proof.
\end{proof}

For a positive integer $n$, let 
\begin{equation}\label{h defn}
h(n)=\max\{I(n), 23^2,(3n/2)^{3n/2}\}.
\end{equation}
It was shown in~\cite{weissp}  that affine $2$-transitive groups are graph-restrictive. Hence, in the hypothesis of the following theorem,  $c(L)$ is well-defined.

\begin{theorem}\label{theo:affine}
Let $k$, $n$ and $d$ be positive integers  with $d> h(n)$. Let $(\Gamma,G)$ be a locally-$L$ pair such that $\Gamma$ has order $kp^n$ for some prime $p$, $L$ has degree $d$ and is $2$-transitive. If $L$  is affine, then $p\leqslant kc(L)$.
\end{theorem}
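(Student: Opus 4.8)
The plan is to split into two cases according to whether $L$ is soluble or insoluble, and to derive a contradiction in each case from the assumption $p>kc(L)$. We begin by applying Lemma~\ref{mainlemma}: since $p>kc(L)$, the stabiliser $G_v$ is isomorphic to a subgroup of $\GL(n,p)$, and $|G_v|$ is coprime to $p$. Consequently, writing $L=G_v^{\Gamma(v)}$, every subgroup of $G_v$ (in particular any copy of $X$ we extract) embeds in $\GL(n,p)$ over a field of characteristic $p$ coprime to its order, so $\lambda$ of that subgroup is at most $n$. The strategy is to produce inside $G_v$ a subgroup whose $\lambda$-value must exceed $n$, contradicting $d>h(n)$.

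In the insoluble case I would argue directly: $\lambda(L)\leqslant\lambda(G_v)\leqslant n$, and since $L$ is an insoluble affine $2$-transitive group, Lemma~\ref{lem:E(H)} gives $d=|V_d|\leqslant|L|\leqslant I(\lambda(L))\leqslant I(n)\leqslant h(n)$, contradicting $d>h(n)$. (One must observe that the degree $d$ of $L$ is at most $|L|$, which is immediate.) This case is short precisely because Lemma~\ref{lem:E(H)} has already done the heavy lifting.

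The soluble case is the substantive one, and this is where Proposition~\ref{BigProp} is deployed. Write $d=r^f$. When $r$ and $f$ are coprime, part~(\ref{agl subgroup}) of Proposition~\ref{BigProp} hands us a subgroup of $G_v$ isomorphic to $X=\V_d\rtimes X_0$, and part~(\ref{AGL}) then forces $n\geqslant|X_0|$; since $L$ is $2$-transitive, $|X_0|\geqslant\frac{d-1}{f}$ by part~(\ref{newnew}), so $n\geqslant\frac{d-1}{f}$, which for $d>h(n)\geqslant(3n/2)^{3n/2}$ is quickly contradictory. The remaining difficulty is the case where $r$ and $f$ share a common factor, so that part~(\ref{agl subgroup}) is unavailable: here I would instead use the elementary abelian subgroup $\V_d\cong\E_{r^f}$, which is involved in $L$ and hence in $G_v$, so that $\lambda(\E_{r^f})\leqslant n$; Lemma~\ref{el ab sections of glnp} then yields $\frac{2f}{3}\leqslant n$, i.e.\ $f\leqslant\frac{3n}{2}$. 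Combining this bound on $f$ with $d=r^f$ and a lower bound on $r$ (noting $r$ and $f$ non-coprime forces $r$ not too large relative to $f$, or bounding $d\leqslant r^{3n/2}$ against the threshold $(3n/2)^{3n/2}$) should drive the contradiction, with the small exceptional values around $r=2$, $f\leqslant 6$ handled by the explicit constant $23^2$ in~(\ref{h defn}).

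I expect the main obstacle to be the bookkeeping in the non-coprime soluble subcase: one has only the weaker invariant $\lambda(\E_{r^f})$ rather than the full subgroup $X$, so the bound on $f$ alone does not immediately control $d=r^f$ unless one also controls $r$, and reconciling the two regimes (coprime versus non-coprime, together with the $r=2$, $f\leqslant 6$ exceptions) is exactly why the threshold $h(n)$ is defined as a maximum of three quantities. The art will be in choosing the case division so that in each regime one of $\frac{d-1}{f}\leqslant n$, $f\leqslant\frac{3n}{2}$ combined with a size bound, or the explicit constant $23^2$ suffices, and verifying that $(3n/2)^{3n/2}$ dominates $r^f$ whenever $f\leqslant 3n/2$ and $r$ is not a tiny exception.
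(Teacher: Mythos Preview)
Your approach is essentially the paper's, and the insoluble case is handled correctly. Two points deserve clarification in the soluble case.

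First, you invoke Proposition~\ref{BigProp} immediately, but that proposition requires $L\leqslant\AGAL(1,d)$, which is not automatic for a soluble affine $2$-transitive group. The paper uses Huppert's classification (via \cite[XII,~7.3]{huppertiii}) to obtain this, and the threshold $d>23^2$ in~(\ref{h defn}) is there precisely to dispose of the finitely many exceptional soluble $2$-transitive groups not lying in $\AGAL(1,d)$. It is \emph{not} there to handle the $r=2$, $f\leqslant 6$ exceptions from Proposition~\ref{BigProp}(\ref{newLemma}); those are already absorbed into parts~(\ref{newnew2}) and~(\ref{agl subgroup}) and never surface in the proof of the theorem.

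Second, your anticipated obstacle in the non-coprime subcase evaporates once you note that $r$ is prime: if $\gcd(r,f)>1$ then $r\mid f$, hence $r\leqslant f$. Combined with $f\leqslant 3n/2$ from Lemma~\ref{el ab sections of glnp}, this gives $d=r^f\leqslant f^f\leqslant(3n/2)^{3n/2}$ directly. The paper in fact uses the dichotomy $r>f$ versus $r\leqslant f$ rather than coprime versus non-coprime; since $r>f$ implies coprimeness (so Proposition~\ref{BigProp}(\ref{agl subgroup}) applies) while $r\leqslant f$ gives $d\leqslant f^f$ immediately, this split is marginally cleaner and avoids the bookkeeping you were worried about. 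Your coprime/non-coprime split also works, but requires the extra observation just made.
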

\begin{proof}
Since $L$ is affine, $d$ is a prime power, say $d=r^f$ for some prime $r$. We assume for a contradiction  that $p> kc(L)$. Let $v$ be a vertex of $\Gamma$. By Lemma~\ref{mainlemma}, $|G_v|$ is coprime to $p$ and $G_v$ is isomorphic to a subgroup of $\GL(n,p)$. In particular, $\lambda(L)\leqslant\lambda(G_v)\leqslant n$.

We first assume that $L$ is soluble.  Since $d>23^2$,  it follows by \cite[XII, 7.3]{huppertiii} that $ L\leqslant \AGAL(1,d)$. Let $X= L\cap \AGL(1,d)$. 

If $r>f$ then, by Proposition~\ref{BigProp}(\ref{agl subgroup}),
 $G_v$ contains a subgroup isomorphic to $X$ and thus so does $\GL(n,p)$. By Proposition~\ref{BigProp}(\ref{AGL}), this implies $n\geqslant |X_0|$. Finally, Proposition~\ref{BigProp}(\ref{newnew}) yields $|X_0|\geqslant\frac{d-1}{f}$ and thus $n\geqslant \frac{d-1}{f}\geq\frac{d-1}{\log_2(d)}$, contradicting the fact that $d>\max\{23^2,(3n/2)^{3n/2}\}$. 

We may thus assume that $r\leqslant f$. Since the group $\E_d=\E_{r^f}$ is involved in $L$, it is involved in $\GL(n,p)$ and hence Lemma~\ref{el ab sections of glnp} gives $f\leqslant 3n/2$ and thus $d=r^f\leqslant f^f\leqslant (3n/2)^{3n/2}$, contradicting the fact that $d> h(n)$.

 We may thus assume that $L$ is insoluble. Lemma~\ref{lem:E(H)} implies that $d \leqslant |L|\leqslant I(\lambda(L)) \leqslant I(n)$, contradicting the fact that $d > h(n) \geqslant I(n)$. This final contradiction yields that $p\leqslant kc(L)$.  
\end{proof}

\section{Proof of Theorem~\ref{main}}\label{sec:proof}

By \cite[Theorem 1.4]{trofweiss1} $2$-transitive groups are graph-restrictive.  Hence we may define  $c(d)$ to be the maximum of $c(L)$ as $L$ runs over the $2$-transitive groups of degree $d$. Let  $J$ and $h$ be as in (\ref{j defn}) and (\ref{h defn}), respectively, and, for a positive integer $n$, let $g(n)=\max\{J(n),h(n)\}$. The proof now follows by applying Theorems~\ref{theo:nonaffine} and~\ref{theo:affine}. \hfill \qed
\begin{remark}
 By consulting the references, it is possible to explicitly compute the functions $c$ and $g$ defined above. 
 Although one can find better bounds than the ones given, we choose not to attempt to optimise these functions, being satisfied merely with their existence.
\end{remark}


\noindent\textsc{Acknowledgements.}
We are grateful to Michael Giudici for pointing out a mistake in an earlier version of this paper.


\begin{thebibliography}{99}
\bibitem{Magma}  W.~Bosma, J.~Cannon and C.~Playoust, The {\sc Magma} Algebra System I: The User Language, {\em J.~Symbolic Comput.} \textbf{24} (1997), 235--265.

\bibitem{ConderLiPot} M.~D.~E.~Conder, C.~H.~Li and P.~Poto\v{c}nik, On the orders of arc-transitive graphs, \textit{J. Algebra} {\bf 421} (2015), 167--186.

 
\bibitem{dixon-mortimer} J.~D.~Dixon and B.~Mortimer, {\it Permutation Groups}. Springer-Verlag, New York, 1996.

\bibitem{djokmiller} D.~\v{Z}.~Djokovi\'{c} and G.~L.~Miller, Regular groups of automorphisms of cubic graphs, \textit{J. Combin. Theory Ser. B} {\bf 29} (1980), 195--230.

\bibitem{gorenstein} D.~Gorenstein, \textit{Finite Groups}. Second Edition. Chelsea Publishing Co., 1980.

\bibitem{GLS} D.~Gorenstein, R.~Lyons and R.~Solomon, \textit{The classification of the finite simple groups}. Mathematical Surveys and Monographs, 40.1. American Mathematical Society, Providence, RI, 1994. 

\bibitem{Hering} C.~Hering, Transitive linear groups and linear groups which contain irreducible subgroups of prime order, \textit{Geometriae Dedicata} {\bf 2} (1974), 425--460.

\bibitem{Huppert} B.~Huppert, Zweifach transitive, aufl\"{o}sbare Permutationsgruppen, {\it Math. Z.} {\bf 68} (1957), 126--150.

\bibitem{huppertiii} B.~Huppert and N. Blackburn, \textit{Finite groups} III,  Springer-Verlag, Berlin, 1982.



\bibitem{isaacsrank} I.~M.~Isaacs, The number of generators of a linear $p$-group, \textit{Canad. J. Math.} {\bf 24} (1972), 851--858. 
 
\bibitem{Isaacs} I.~M.~Isaacs, \textit{Character Theory of Finite Groups}.  AMS Chelsea Pub., New York, 1976.

\bibitem{PSVRestrictive} P.~Poto\v{c}nik, P.~Spiga and G.~Verret, On graph-restrictive permutation groups, \textit{J. Comb. Theory, Ser. B} \textbf{102} (2012), 820--831.

\bibitem{praegerquasip} C.~E.~Praeger, Finite quasiprimitive graphs. Surveys in combinatorics, 1997 (London), 65--85, London Math. Soc. Lecture Note Ser., 241, Cambridge Univ. Press, Cambridge, 1997.
 
\bibitem{praegerconjecture}C.~E.~Praeger, Finite quasiprimitive group actions on graphs and designs, in: Young Gheel Baik, David L. Johnson, Ann Chi Kim (Eds.), Groups -- Korea, de Gruyter, Berlin, New York, (2000), pp. 319--331.
 
\bibitem{seress}\'{A}.~Seress, Toward the classification of s-arc transitive graphs. \textit{Groups St. Andrews 2005}. Vol. 2, 401--414, London Math. Soc. Lecture Note Ser., 340, Cambridge Univ. Press, Cambridge, 2007. 
 
\bibitem{pablo}P.~Spiga, On $G$-locally primitive graphs of locally twisted wreath type and a conjecture of Weiss, \textit{J. Combin. Theory Ser. A}, {\bf 118} (2011), 2257--2260. 
 
\bibitem{trofweiss1}V.~I.~Trofimov and R.~M.~Weiss, Graphs with a locally linear group of automorphisms, \textit{Math. Proc. Cambridge Philos. Soc.} {\bf 118} (1995), 191--206. 
 
\bibitem{trofweiss2}V.~I.~Trofimov and R.~M.~Weiss, The group $\mathrm E_6(q)$ and graphs with a locally linear group of automorphisms, \textit{Math. Proc. Cambridge Philos. Soc.} {\bf 148} (2010), 1--32. 
 
\bibitem{junior}G.~Verret, On the order of arc-stabilisers in arc-transitive graphs, \textit{Bull. Aust. Math. Soc.} {\bf 80} (2009), 498--505.
 
\bibitem{weissp} R.~Weiss, An application of $p$-factorization methods to symmetric graphs, \textit{Math. Proc. Camb. Phil. Soc.} {\bf 85} (1979), 43--48.

\end{thebibliography}
\end{document}